\newtheorem{theorem}{Theorem}[section]
\newtheorem{lemma}[theorem]{Lemma}
\theoremstyle{definition}
\newtheorem{remark}[theorem]{Remark}
\numberwithin{equation}{section}
\begin{document}

{\renewcommand{\thefootnote}{}\footnote{2010 {\it Mathematics Subject 
Classification.} Primary 11B75; Secondary 11A07, 11B65, 05A19, 05A19.

{\it Keywords and phrases.} 
Congruence,  Fermat quotient, harmonic numbers.}
\setcounter{footnote}{0}}

\title{An elementary proof of a congruence by Skula and Granville}

\author{Romeo Me\v strovi\' c}

\address{Maritime Faculty, University of Montenegro, Dobrota 36,
 85330 Kotor, Montenegro} \email{romeo@ac.me}

\maketitle

   \begin{abstract} 
Let $p\ge 5$ be a  prime, 
and let $q_p(2):=(2^{p-1}-1)/p$ be the Fermat quotient of $p$
to base $2$. 
The following curious congruence  was conjectured by L. Skula 
and proved by A. Granville
     $$
q_p(2)^2\equiv -\sum_{k=1}^{p-1}\frac{2^k}{k^2}\pmod{p}.
  $$
In this note we establish the above congruence by entirely elementary
number theory arguments.
   \end{abstract}

\section{Introduction and Statement  of  the Main Result}

  The Fermat Little Theorem states that if $p$ is a 
prime and $a$ is an integer not divisible by $p$,
then $a^{p-1}\equiv 1\,(\bmod{\,\,p})$. This gives rise
to the definition of the {\it Fermat quotient} of $p$ to base $a$
   $$
q_p(a):=\frac{a^{p-1}-1}{p},
    $$  
which is an integer. 
Fermat quotients played an important role in the 
study of cyclotomic fields and Fermat Last Theorem.
More precisely, divisibility of Fermat quotient $q_p(a)$ by $p$
has numerous applications which include the Fermat Last Theorem and
squarefreeness testing (see \cite{as1}, \cite{cp},
\cite{em}, \cite{gr1}  and \cite{l}). 
Ribenboim \cite{r} and  Granville \cite{gr1}, 
besides proving new results, provide a review of known facts and open problems.
 
By a classical Glaisher's result
(see \cite{gl} or \cite{gr}) for a prime $p\ge 3$,
  \begin{equation}\label{con1.1}
q_p(2)\equiv -\frac{1}{2}\sum_{k=1}^{p-1}\frac{2^k}{k}\pmod{p}.
  \end{equation}
Recently Skula conjectured that for any prime $p\ge 5$,
  \begin{equation}\label{con1.2}
q_p(2)^2\equiv -\sum_{k=1}^{p-1}\frac{2^k}{k^2}\pmod{p}.
  \end{equation}
Applying  certain polynomial congruences, 
Granville \cite{gr} proved the congruence (\ref{con1.2}).
In this note, we give an elementary proof of 
this congruence which is based on congruences for some  harmonic type sums.

 \begin{remark}
Recently, given a prime $p$ and a positive integer $r<p-1$,
R. Tauraso \cite[Theorem 2.3]{t} established the 
congruence $\sum_{k=1}^{p-1}2^k/k^r\,(\bmod{\,\,p})$ in terms 
of an alternating $r$-tiple harmonic sum.  For example, combining this 
result when $r=2$ with  the congruence (\ref{con1.2})
\cite[Corollary 2.4]{t}, it follows that 
  $$ 
\sum_{1\le i<j\le p-1}\frac{(-1)^j}{ij}
\equiv q_p(2)^2\equiv -\sum_{k=1}^{p-1}\frac{2^k}{k^2}\pmod{p}.
  $$
\end{remark}

\section{Proof of the congruence (1.2)}

The {\it harmonic numbers} $H_n$ are defined by
   $$
H_n:=\sum_{j=1}^n\frac{1}{j},\quad n=1,2,\ldots,
  $$
where by convention $H_0=0$.

\begin{lemma} For any  prime $p\ge 5$ we have 
    $$
q_p(2)^2\equiv \sum_{k=1}^{p-1}\left(2^k+\frac{1}{2^k}\right)\frac{H_k}{k+1}
\pmod{p}.
\leqno(2.1)
   $$
\end{lemma}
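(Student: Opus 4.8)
The plan is to start from the known Glaisher congruence (1.1), which expresses $q_p(2)$ as a single sum, and square it. Squaring $-\frac{1}{2}\sum_{k=1}^{p-1} 2^k/k$ produces a double sum
$$
q_p(2)^2 \equiv \frac{1}{4}\sum_{i=1}^{p-1}\sum_{j=1}^{p-1}\frac{2^{i+j}}{ij}\pmod{p},
$$
and the goal is to manipulate this double sum into the single sum on the right-hand side of (2.1) involving harmonic numbers. The key organizing idea is to introduce the new summation variable $k=i+j$ (the exponent of $2$), collecting all pairs $(i,j)$ with a fixed sum. For each fixed value of $i+j$ the factor $2^{i+j}$ is constant, so the inner structure reduces to sums of the form $\sum 1/(ij)$ over pairs summing to a given value, which is exactly where harmonic numbers enter through the partial-fraction identity $\frac{1}{ij}=\frac{1}{i+j}\bigl(\frac{1}{i}+\frac{1}{j}\bigr)$.

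First I would split the double sum according to whether $i+j\le p-1$ or $i+j\ge p$, since the exponent $2^{i+j}$ must be reduced modulo the order considerations and, more importantly, because $2^{i+j}\equiv 2^{i+j-p}\cdot 2^{p}\equiv 2\cdot 2^{i+j-p}\pmod p$ when $i+j\ge p$ (using $2^{p-1}\equiv 1+pq_p(2)$, and working modulo $p$). This is what will ultimately produce the two terms $2^k$ and $1/2^k$ in (2.1): the block $i+j=k$ with $1\le k\le p-1$ contributes the $2^k$ piece, while the block $i+j=p+k$ (equivalently $i+j\equiv k$ for the complementary range) contributes a $2^{k}$-type factor that, after rescaling, appears as $1/2^k$ up to the overall constant. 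For each fixed sum $s=i+j$, applying the partial-fraction identity converts $\sum_{i+j=s}\frac{1}{ij}$ into $\frac{2}{s}H_{s-1}$ or a closely related harmonic expression, and shifting the index to align $s$ with $k+1$ is what yields the denominator $k+1$ and the numerator $H_k$.

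The main obstacle I expect is the careful bookkeeping across the two ranges and the reindexing that merges them into the symmetric combination $2^k + 2^{-k}$ with the single harmonic factor $H_k/(k+1)$. Specifically, one must verify that the constant $\frac14$ from squaring, combined with the factor $2$ from the partial-fraction identity and the factor $2$ from $2^{p}\equiv 2$, collapses correctly to give coefficient $1$ in front of each term, and that the harmonic-number index lands exactly at $H_k$ rather than $H_{k-1}$ or $H_{k+1}$. I would handle this by treating the diagonal terms $i=j$ (where the partial-fraction trick degenerates) separately, and by checking the boundary indices $k=0$ and $k=p-1$ by hand to confirm the telescoping of ranges is clean. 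Once the two blocks are written with a common index $k$ and the harmonic factor extracted, the two exponential contributions combine into $\bigl(2^k+1/2^k\bigr)$ and the lemma follows directly.
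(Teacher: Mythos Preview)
Your strategy---square Glaisher's congruence, group the double sum by the value of $i+j$, and use the partial-fraction identity $\frac{1}{ij}=\frac{1}{i+j}\bigl(\frac{1}{i}+\frac{1}{j}\bigr)$ to produce harmonic numbers---is exactly the approach the paper takes. Two points, however, are not right as stated and would keep the argument from closing.

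First, your explanation of how the $1/2^{k}$ piece arises is incorrect. Reducing $2^{i+j}$ for $i+j=p+m$ gives $2^{p+m}\equiv 2\cdot 2^{m}\pmod p$, still a positive power of $2$; carrying out your high block directly yields $-\sum_{m=1}^{p-2}\frac{2^{m}H_{m}}{m}$, not anything with $2^{-k}$. The paper avoids this by first reflecting $k\mapsto p-k$ in the \emph{single} sum, so that one squares $\sum_{k}1/(k\cdot 2^{k})$ instead of $\sum_{k}2^{k}/k$; then the low block $i+j\le p$ immediately carries the factor $2^{-(i+j)}$, and the high block is handled by the change $(i,j)\mapsto(p-i,p-j)$ to produce the $2^{k}$ half. (Equivalently, from your expression one can still recover the $1/2^{k}$ sum by applying the reflection $m\mapsto p-1-m$ together with $H_{p-1-m}\equiv H_{m}\pmod p$, but that is an extra step you did not mention.) Also, the diagonal $i=j$ causes no degeneration: $\frac{1}{i^{2}}=\frac{1}{2i}\bigl(\frac{1}{i}+\frac{1}{i}\bigr)$ is a perfectly good instance of the identity, so no separate treatment is needed there.

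Second, the slice $i+j=p$ cannot be dismissed as a routine boundary check. Its contribution is $\frac{2^{p+1}}{p}H_{p-1}$, which contains a genuine factor $1/p$; you need Wolstenholme's theorem $H_{p-1}\equiv 0\pmod{p^{2}}$ (not merely $\pmod p$) to conclude that this term vanishes modulo $p$. The paper isolates this as a separate summand $S_{3}$ and invokes Wolstenholme explicitly; your outline should do the same.
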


\begin{proof} 
In the present proof we will always suppose
that $i$ and $j$ are positive integers such that $i\le p-1$ and $j\le p-1$,
and that all the summations including $i$ and $j$ range over the set 
of such pairs $(i,j)$. 

Using the congruence (1.1) and the fact that
by Fermat Little Theorem, $2^{p-1}\equiv 1\,(\bmod{\,\, p})$,
we get 
  \begin{eqnarray*}
q_p(2)^2&=&\left(\frac{2^{p-1}-1}{p}\right)^2\equiv
\frac{1}{4}\left(\sum_{k=1}^{p-1}\frac{2^{k}}{k}\right)^2=
\frac{1}{4}\left(\sum_{k=1}^{p-1}\frac{2^{p-k}}{p-k}\right)^2\\
(2.2)\qquad\qquad\qquad &\equiv& \frac{1}{4}\left(2\sum_{k=1}^{p-1}
\frac{2^{(p-1)-k}}{-k}\right)^2
\equiv\left(\sum_{k=1}^{p-1}\frac{1}{k\cdot 2^k}\right)^2\qquad\qquad
\qquad\\
&=& \sum_{i+j\le p}\frac{1}{ij\cdot 2^{i+j}}+
\sum_{i+j\ge p}
\frac{1}{ij\cdot 2^{i+j}}-
\sum_{i+j= p}\frac{1}{ij\cdot 2^{i+j}}\\
&:=& S_1+S_2-S_3\pmod{p}.
  \end{eqnarray*}
We will determine $S_1$, $S_2$ and $S_3$ modulo $p$ as follows.
    \begin{eqnarray*}
S_1 &=& \sum_{i+j\le p}\frac{1}{ij\cdot 2^{i+j}}
=\sum_{k=2}^p\sum_{i+j= k}\frac{1}{ij\cdot 2^{k}}\\
(2.3)\qquad &=&\sum_{k=2}^p\frac{1}{2^k}\cdot\frac{1}{k}
\sum_{i= 1}^{k-1}\left(\frac{1}{i}+\frac{1}{k-i}\right)=
\sum_{k=2}^p\frac{2H_{k-1}}{k\cdot 2^k}=
\sum_{k=1}^{p-1}\frac{H_{k}}{(k+1) 2^{k}}.\qquad
   \end{eqnarray*}
Observe that the pair $(i,j)$ satisfies $i+j=k$
for some $k\in\{p,p+1,\ldots,2p-2\}$ if and only if
for such a $k$ holds $(p-i)+(p-j)=l$ with     
 $l:=2p-k\le p$. Accordingly, 
using the fact that by Fermat Little Theorem, 
$2^{2p}\equiv 2^2\,(\bmod{\,\, p})$, we have  
   \begin{eqnarray*}
S_2 &=&\sum_{i+j\ge p}\frac{1}{ij\cdot 2^{i+j}}
=\sum_{(p-i)+(p-j)\ge p}^{p-1}\frac{1}{(p-i)(p-j)\cdot 2^{(p-i)+(p-j)}}\\
 &\equiv&\sum_{i+j\le p}\frac{1}{ij\cdot 2^{2p-(i+j)}}
\equiv\frac{1}{4}\sum_{i+j\le p}\frac{2^{i+j}}{ij}=
\frac{1}{4}\sum_{k=2}^p\sum_{i+j= k}\frac{2^k}{ij}\\
(2.4)\qquad\qquad&=&\frac{1}{4}\sum_{k=2}^p\frac{2^k}{k}
\sum_{i= 1}^{k-1}\left(\frac{1}{i}+\frac{1}{k-i}\right)=
\sum_{k=2}^{p}\frac{2^{k-1}H_{k-1}}{k}\qquad\qquad\qquad\qquad\\
&=& \sum_{k=1}^{p-1}\frac{2^{k}H_{k}}{k+1}\pmod{p}.
   \end{eqnarray*}
By {\it Wolstenholme's theorem} (see, e.g., \cite{w}, 
\cite{gr0}; for its generalizations see \cite[Theorems 1 and 2]{sl})
if $p$ is a prime greater than 3, then the numerator of the 
fraction $H_{p-1}=1+\frac{1}{2}+\frac{1}{3}+\cdots+\frac{1}{p-1}$ is 
divisible by $p^2$. Hence, we find that
  \begin{eqnarray*}
S_3&=&\sum_{i+j= p}\frac{2^{i+j}}{ij}=2^p\sum_{i=1}^{p-1}
\frac{1}{i(p-i)}\\
(2.5)\qquad \qquad
&=& \frac{2^{p}}{p}\sum_{i=1}^{p-1}\left(\frac{1}{i}+\frac{1}{p-i}\right)=
\frac{2^{p+1}}{p}H_{p-1}\equiv 0\pmod{p}.
\qquad \qquad
  \end{eqnarray*}
Finally, substituting (2.3), (2.4) and (2.5) into (2.2), we 
immediately obtain (2.1).
\end{proof}

Proof of the following result easily follows from 
the congruence $H_{p-1}\equiv 0\,(\bmod{\,\, p})$.

\begin{lemma}\cite[Lemma 2.1]{s} Let $p$ be an odd prime. Then 
  $$
H_{p-k-1}\equiv H_{k}\pmod{p}\leqno(2.6)
  $$
for every $k=1,2,\ldots p-2$.
\end{lemma}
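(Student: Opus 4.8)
The plan is to read off the claimed symmetry directly from the single fact that $H_{p-1}\equiv 0\pmod p$, which the sentence preceding the lemma explicitly invites us to use. (For completeness one recalls why this holds: modulo $p$ the residues $1/1,1/2,\dots,1/(p-1)$ form a permutation of $1,2,\dots,p-1$, whose sum is $p(p-1)/2\equiv 0\pmod p$ for $p\ge 3$.) I would fix $k$ with $1\le k\le p-2$, so that $1\le p-k-1\le p-2$ and every denominator occurring below is a unit modulo $p$.

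First I would split the full harmonic sum at the index $p-k-1$, writing
$$
H_{p-1}=H_{p-k-1}+\sum_{j=p-k}^{p-1}\frac{1}{j}.
$$
Since $H_{p-1}\equiv 0\pmod p$, it suffices to show that the tail sum is congruent to $-H_k$; the lemma then follows at once by rearranging $0\equiv H_{p-k-1}-H_k\pmod p$.

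The second step treats the tail by the reindexing $j=p-i$: as $j$ runs over $p-k,\dots,p-1$ the new index $i$ runs over $1,\dots,k$, and modulo $p$ one has $1/(p-i)\equiv 1/(-i)\equiv -1/i$. Hence
$$
\sum_{j=p-k}^{p-1}\frac{1}{j}=\sum_{i=1}^{k}\frac{1}{p-i}\equiv-\sum_{i=1}^{k}\frac{1}{i}=-H_k\pmod p,
$$
which is exactly the claim needed above. Substituting back gives $H_{p-k-1}\equiv H_k\pmod p$.

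There is really no serious obstacle here: the argument is a one-line reindexing supported by the elementary congruence $H_{p-1}\equiv 0\pmod p$. The only points demanding care are bookkeeping ones — verifying that the map $j\mapsto p-j$ sends the block $\{p-k,\dots,p-1\}$ bijectively onto $\{1,\dots,k\}$, and that every fraction $1/j$ with $1\le j\le p-1$ is a legitimate residue modulo $p$ (invertibility fails only at $j=p$, an index we never reach since $k\le p-2$). Once these are in place, the sign flip $1/(p-i)\equiv-1/i$ does all the work.
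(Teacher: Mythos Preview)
Your proof is correct and is exactly the argument the paper has in mind: the paper does not spell out a proof but simply remarks that the result ``easily follows from the congruence $H_{p-1}\equiv 0\pmod{p}$,'' and your splitting-and-reindexing computation is the natural way to cash this out.
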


\begin{lemma} For any prime $p\ge 5$ we have
  $$
q_p(2)^2\equiv \sum_{k=1}^{p-1}\frac{H_k}{k\cdot 2^k}-
\sum_{k=1}^{p-1}\frac{2^k}{k^2}\pmod{p}.\leqno(2.7)
  $$
\end{lemma}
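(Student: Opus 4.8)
The plan is to transform the right-hand side of the congruence (2.1) directly into the right-hand side of (2.7), using only two elementary devices: the reflection $k\mapsto p-1-k$, which invokes Lemma 2.2 (congruence (2.6)) together with Wolstenholme's theorem, and the index shift $m=k+1$. Write the right-hand side of (2.1) as $A+B$, where
$$
A=\sum_{k=1}^{p-1}\frac{2^k H_k}{k+1},\qquad B=\sum_{k=1}^{p-1}\frac{H_k}{(k+1)2^k},
$$
so that by (2.1) we have $q_p(2)^2\equiv A+B\pmod p$. I would treat $A$ and $B$ by different manipulations and then recombine.

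First I would apply the reflection $k\mapsto p-1-k$ to $A$. Since $2^{p-1}\equiv1$ gives $2^{p-1-k}\equiv 2^{-k}$, since the denominator $k+1$ becomes $p-k\equiv -k\pmod p$, and since $H_{p-1-k}\equiv H_k$ by Lemma 2.2, each summand $\tfrac{2^kH_k}{k+1}$ is carried to $-\tfrac{H_k}{k\,2^k}$. The only exception is the single summand $\tfrac{2^{p-1}H_{p-1}}{p}$ (the one where $p-k$ fails to be invertible modulo $p$), and this is $\equiv0\pmod p$ by Wolstenholme's theorem. This yields
$$
A\equiv-\sum_{k=1}^{p-1}\frac{H_k}{k\,2^k}\pmod p.
$$

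Next I would rewrite $B$ by the substitution $m=k+1$. After discarding the $m=p$ term $\tfrac{H_{p-1}}{p\,2^{p-1}}$ (again $\equiv0$ by Wolstenholme) and using $H_{m-1}=H_m-\tfrac1m$ together with $2^{m-1}=2^m/2$, one obtains
$$
B\equiv 2\sum_{m=1}^{p-1}\frac{H_m}{m\,2^m}-2\sum_{m=1}^{p-1}\frac{1}{m^2\,2^m}\pmod p.
$$
The remaining sum is handled by the plain reflection $m\mapsto p-m$: since $(p-m)^2\equiv m^2$ and $2^{p-m}\equiv 2^{1-m}$ (using $2^{p}\equiv2$), it follows that $\sum_{m=1}^{p-1}\tfrac{1}{m^2 2^m}\equiv\tfrac12\sum_{m=1}^{p-1}\tfrac{2^m}{m^2}$, whence $B\equiv 2\sum_{k=1}^{p-1}\tfrac{H_k}{k\,2^k}-\sum_{k=1}^{p-1}\tfrac{2^k}{k^2}$.

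Adding the two expressions, the coefficient of $\sum_{k=1}^{p-1}\tfrac{H_k}{k\,2^k}$ combines as $-1+2=1$, so
$$
q_p(2)^2\equiv A+B\equiv\sum_{k=1}^{p-1}\frac{H_k}{k\,2^k}-\sum_{k=1}^{p-1}\frac{2^k}{k^2}\pmod p,
$$
which is exactly (2.7). The main obstacle is purely the boundary bookkeeping: in every shifted or reflected sum a factor $p$ can slip into a denominator, and each such term must be shown to vanish modulo $p$. This is precisely where Wolstenholme's theorem ($H_{p-1}\equiv0\pmod{p^2}$) is indispensable, since the offending terms have the shape $H_{p-1}/p$ and the weaker statement $H_{p-1}\equiv0\pmod p$ would not suffice.
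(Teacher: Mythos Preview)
Your proof is correct and follows essentially the same route as the paper: split the right-hand side of (2.1) as $A+B$, handle $A$ by the reflection $k\mapsto p-1-k$ using Lemma~2.2 and Wolstenholme to get (2.8), handle $B$ by the shift $m=k+1$ to get (2.9), and convert $\sum 1/(m^2 2^m)$ into $\tfrac12\sum 2^m/m^2$ via the reflection $m\mapsto p-m$ as in (2.10), then add. Even your treatment of the boundary terms (the $H_{p-1}/p$ contributions requiring the full $p^2$-divisibility from Wolstenholme) matches the paper's.
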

\begin{proof}
Since by Wolstenholme's theorem,
$H_{p-1}/p\equiv 0\,(\bmod{\,\, p})$, 
using this and the congruences 
$2^{p-1}\equiv 1\,(\bmod{\,\, p})$ and (2.6) of Lemma 2.2, 
we immediately obtain
   \begin{eqnarray*}
\sum_{k=1}^{p-1}\frac{2^kH_k}{k+1}
&\equiv& \sum_{k=1}^{p-2}\frac{2^kH_k}{k+1}
= \sum_{k=1}^{p-2}\frac{2^{p-k-1}H_{p-k-1}}{p-k}\\
(2.8)\qquad\qquad\qquad &\equiv&-\sum_{k=1}^{p-2}\frac{H_k}{k\cdot 2^k}
\equiv-\sum_{k=1}^{p-1}\frac{H_k}{k\cdot 2^k}\pmod{p}.
\qquad\qquad\qquad
    \end{eqnarray*}
Further, we have
  \begin{eqnarray*}
\sum_{k=1}^{p-2}\frac{H_k}{(k+1)2^k}
&=&2\sum_{k=1}^{p-2}\frac{H_{k+1}-\frac{1}{k+1}}{(k+1)2^{k+1}}\\
(2.9)\qquad\qquad\qquad\qquad\qquad &=& 2\sum_{k=1}^{p-1}
\frac{H_{k}}{k\cdot 2^{k}}-
2\sum_{k=1}^{p-1}\frac{1}{k^2\cdot 2^k}.\qquad\qquad\qquad\qquad\qquad
   \end{eqnarray*}
Moreover, from $2^{p}\equiv 2\,(\bmod{\,\, p})$ we have 
  \begin{eqnarray*}
\sum_{k=1}^{p-1}\frac{1}{k^2\cdot 2^k}
&=&\sum_{k=1}^{p-1}\frac{1}{(p-k)^2\cdot 2^{p-k}}\\
(2.10)\qquad\qquad\qquad\qquad\qquad &\equiv
&\sum_{k=1}^{p-1}\frac{1}{k^2\cdot 2^{1-k}}=
\frac{1}{2}\sum_{k=1}^{p-1}\frac{2^k}{k^2}\pmod{p}.\qquad\qquad\qquad\qquad
   \end{eqnarray*}
The congruences (2.8), (2.9) and (2.10) immediately yield
  \begin{eqnarray*}
 \sum_{k=1}^{p-1}\left(2^k+\frac{1}{2^k}\right)\frac{H_k}{k+1}
&=&\sum_{k=1}^{p-1}\frac{2^kH_k}{k+1}+\sum_{k=1}^{p-1}\frac{H_k}{(k+1)2^k}\\
(2.11)\qquad\qquad\qquad\qquad &\equiv& \sum_{k=1}^{p-1}\frac{H_k}{k\cdot 2^k}-
\sum_{k=1}^{p-1}\frac{2^k}{k^2}\pmod{p}.
\qquad\qquad\qquad\qquad
    \end{eqnarray*}
Finally, comparing (2.1) of Lemma 2.1 with (2.11), we obtain the desired congruence
(2.7).
   \end{proof}
Notice that the congruence 
$\sum_{k=1}^{p-1}\frac{H_k}{k\cdot 2^k}\equiv 0\,(\bmod{\,\,p})$ 
with a prime $p\ge 5$ is recently established by Z.W. 
Sun \cite[Theorem 1.1 (1.1)]{s} and it is based on the identity
from  \cite[Lemma 2.4]{s}.
Here we give another simple proof of this congruence (Lemma 2.6).
 \begin{lemma} For any prime $p\ge 5$ we have 
  $$
\sum_{k=1}^{p-1}\frac{H_k}{k\cdot 2^k}\equiv 
\frac{1}{2}\sum_{1\le i\le j\le p-1}\frac{2^i-1}{ij}\pmod{p}.\leqno{(2.12)}
   $$   
\end{lemma}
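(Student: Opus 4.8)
The plan is to expand both sides of (2.12) as double sums over the triangle $1\le i\le j\le p-1$ and to relate them through a single symmetry substitution. First I would expand $H_k=\sum_{i=1}^{k}1/i$ in the left-hand side, which gives
\[
\sum_{k=1}^{p-1}\frac{H_k}{k\cdot 2^k}=\sum_{1\le i\le j\le p-1}\frac{1}{ij\cdot 2^{j}},
\]
so that the power of $2$ now sits on the \emph{larger} index $j$, whereas on the right-hand side of (2.12) the factor $2^i$ involves the \emph{smaller} index $i$. The whole point is to exchange the roles of these two indices.

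The key step is to apply the involution $(i,j)\mapsto(p-j,\,p-i)$, which maps the triangle $1\le i\le j\le p-1$ bijectively onto itself. Using the reductions $1/(p-i)\equiv -1/i$ and $1/(p-j)\equiv -1/j$ together with $2^{-(p-i)}=2^{i}\cdot 2^{-p}\equiv 2^{i-1}\pmod p$ (which follows from $2^{p}\equiv 2$), each summand transforms according to
\[
\frac{1}{ij\cdot 2^{j}}\longmapsto\frac{1}{(p-j)(p-i)\,2^{\,p-i}}\equiv\frac{2^{\,i-1}}{ij}\pmod p .
\]
Summing over the triangle therefore yields
\[
\sum_{1\le i\le j\le p-1}\frac{1}{ij\cdot 2^{j}}\equiv\frac12\sum_{1\le i\le j\le p-1}\frac{2^{i}}{ij}\pmod p .
\]

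It then remains to treat the right-hand side of (2.12). Splitting $2^i-1$ I would write
\[
\sum_{1\le i\le j\le p-1}\frac{2^i-1}{ij}=\sum_{1\le i\le j\le p-1}\frac{2^i}{ij}-\sum_{1\le i\le j\le p-1}\frac{1}{ij},
\]
and observe that the last sum equals $\tfrac12\bigl(H_{p-1}^2+\sum_{i=1}^{p-1}1/i^2\bigr)$, which is $\equiv0\pmod p$ for $p\ge5$ by Wolstenholme's theorem ($H_{p-1}\equiv0$) together with the standard fact $\sum_{i=1}^{p-1}1/i^{2}\equiv0\pmod p$. Combining this with the previous display gives exactly $2\sum_{k}H_k/(k\cdot 2^k)\equiv\sum_{1\le i\le j\le p-1}(2^i-1)/(ij)$, which is (2.12). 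The only genuinely creative step is spotting the involution $(i,j)\mapsto(p-j,p-i)$ and checking that it converts $2^{-j}$ into a multiple of $2^{i}$; everything else is routine modular reduction and two applications of Wolstenholme's theorem.
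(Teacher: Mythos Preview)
Your proof is correct. Both your argument and the paper's rest on the same elementary ingredients---expanding $H_k$ into a double sum, reflecting indices through $p$, and invoking $H_{p-1}\equiv 0$ together with $\sum_{k=1}^{p-1}1/k^2\equiv 0$---but you reach the conclusion more directly. The paper first expands the product $H_{p-1}\cdot\sum_k 1/(k\,2^k)\equiv 0$ and applies the substitution $(i,j)\mapsto(p-i,p-j)$ on the region $j<i$ to obtain
\[
\sum_{k=1}^{p-1}\frac{H_k}{k\cdot 2^k}\equiv -\frac12\sum_{1\le i<j\le p-1}\frac{2^{j}}{ij}\pmod p,
\]
with the power of $2$ on the \emph{larger} index; it then needs a second product identity, $H_{p-1}\cdot\sum_k 2^k/k\equiv 0$, to convert this into $\tfrac12\sum_{i\le j}2^{i}/(ij)$, after which the ``$-1$'' part is handled exactly as you do. Your single involution $(i,j)\mapsto(p-j,p-i)$ maps the triangle $\{1\le i\le j\le p-1\}$ bijectively onto itself and simultaneously turns $2^{-j}$ into $2^{\,i-1}$, accomplishing in one stroke what the paper does in two. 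The gain is brevity and conceptual cleanliness; the paper's route, on the other hand, records the intermediate congruence (2.15), which may be of independent interest.
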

\begin{proof} From the identity 
  $$
  \left(\sum_{k=1}^{p-1}\frac{1}{k}\right)
\left(\sum_{k=1}^{p-1}\frac{1}{k\cdot 2^k}\right)=
\sum_{1\le i<j\le p-1}\frac{1}{ij\cdot 2^j}+
\sum_{1\le j<i\le p-1}\frac{1}{ij\cdot 2^j}+
\sum_{k=1}^{p-1}\frac{1}{k^2\cdot 2^k},
  $$
and the congruence 
$H_{p-1}=1+\frac{1}{2}+\frac{1}{3}+\cdots+\frac{1}{p-1}\equiv 0\,
(\bmod\,\, p)$ it follows that 
  $$
\sum_{1\le i<j\le p-1}\frac{1}{ij\cdot 2^j}+
\sum_{1\le j<i\le p-1}\frac{1}{ij\cdot 2^j}+
\sum_{k=1}^{p-1}\frac{1}{k^2\cdot 2^k}\equiv 0\pmod{p}.\leqno{(2.13)}
  $$
Since  $2^{p}\equiv 2\,(\bmod{\,\,p})$, we have 
  $$
\sum_{1\le j<i\le p-1}\frac{1}{ij\cdot 2^j}\equiv
\sum_{1\le j<i\le p-1}\frac{1}{2}\frac{2^{p-j}}{(p-i)(p-j)}
\equiv \frac{1}{2}\sum_{1\le i<j\le p-1}\frac{2^j}{ij}\pmod{p},
  $$
which substituting into (2.13) gives 
  $$
\sum_{1\le i<j\le p-1}\frac{1}{ij\cdot 2^j}+
\sum_{k=1}^{p-1}\frac{1}{k^2\cdot 2^k}\equiv
-\frac{1}{2}\sum_{1\le i<j\le p-1}\frac{2^j}{ij}
\pmod{p}.\leqno{(2.14)}
  $$
Further, if  we observe that
  $$
\sum_{k=1}^{p-1}\frac{H_k}{k\cdot 2^k}=
\sum_{k=1}^{p-1}\frac{H_{k-1}+\frac{1}{k}}{k\cdot 2^k}=
\sum_{1\le i< j\le p-1}\frac{1}{ij\cdot 2^j}
+\sum_{k=1}^{p-1}\frac{1}{k^2\cdot 2^k},
   $$  
then substituting (2.14) into the previous identity,
we obtain 
 $$
\sum_{k=1}^{p-1}\frac{H_k}{k\cdot 2^k}\equiv 
-\frac{1}{2}\sum_{1\le i<j\le p-1}\frac{2^j}{ij}\pmod{p}.\leqno{(2.15)}
   $$   
Since 
  $$
0\equiv \left(\sum_{k=1}^{p-1}\frac{1}{k}\right)
\left(\sum_{k=1}^{p-1}\frac{2^k}{k}\right)=
\sum_{1\le j\le i\le p-1}\frac{2^j}{ij}+
\sum_{1\le i<j\le p-1}\frac{2^j}{ij}\pmod{p},
   $$
comparing this with (2.15), we immediately obtain 
 $$
\sum_{k=1}^{p-1}\frac{H_k}{k\cdot 2^k}\equiv 
\frac{1}{2}\sum_{1\le i\le j\le p-1}\frac{2^i}{ij}\pmod{p}.\leqno{(2.16)}
   $$
From a well known fact that (see e.g., \cite[p. 353]{l})
 $$
\sum_{k=1}^{p-1}\frac{1}{k^2}\equiv 0\pmod{p}\leqno{(2.17)}
  $$
we find that 
    $$
  \sum_{1\le i\le j\le p-1}\frac{1}{ij}=
\frac{1}{2}\left(\left(\sum_{k=1}^{p-1}\frac{1}{k}\right)^2+
\sum_{k=1}^{p-1}\frac{1}{k^2}\right)\equiv 0\pmod{p}.
   $$
Finally, the above congruence and (2.16) immediately 
yield the desired congruence (2.12).  
   \end{proof}

\begin{lemma} For any positive integer $n$ holds 
       $$
\sum_{1\le i\le j\le n}\frac{2^i-1}{ij}
=\sum_{k=1}^{n}\frac{1}{k^2}{n\choose k}.\leqno{(2.18)}
       $$   
\end{lemma}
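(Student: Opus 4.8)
The plan is to prove the identity by induction on $n$, writing $L(n):=\sum_{1\le i\le j\le n}\frac{2^i-1}{ij}$ for the left-hand side and $R(n):=\sum_{k=1}^n\frac{1}{k^2}\binom{n}{k}$ for the right-hand side. The base case $n=1$ is immediate, since both sides equal $1$. For the inductive step I would compare the two increments $L(n)-L(n-1)$ and $R(n)-R(n-1)$ and show that they agree.

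The increment on the left is transparent: the pairs $(i,j)$ counted by $L(n)$ but not by $L(n-1)$ are exactly those with $j=n$ and $1\le i\le n$, so $L(n)-L(n-1)=\frac{1}{n}\sum_{i=1}^n\frac{2^i-1}{i}$. For the right-hand side I would apply Pascal's rule $\binom{n}{k}=\binom{n-1}{k}+\binom{n-1}{k-1}$; the first piece reproduces $R(n-1)$ (the term $k=n$ vanishes because $\binom{n-1}{n}=0$), while the second piece, using the elementary relation $\frac{1}{k}\binom{n-1}{k-1}=\frac{1}{n}\binom{n}{k}$, collapses to $\frac{1}{n}\sum_{k=1}^n\frac{1}{k}\binom{n}{k}$. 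Thus $R(n)-R(n-1)=\frac{1}{n}\sum_{k=1}^n\frac{1}{k}\binom{n}{k}$, and the whole inductive step reduces to the single-sum auxiliary identity
$$\sum_{i=1}^n\frac{2^i-1}{i}=\sum_{k=1}^n\frac{1}{k}\binom{n}{k}.\qquad(\star)$$

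It remains to establish $(\star)$, which I would again prove by induction. Its left-hand increment is plainly $\frac{2^n-1}{n}$, and the same binomial manipulation used above — Pascal's rule followed by $\frac{1}{k}\binom{n-1}{k-1}=\frac{1}{n}\binom{n}{k}$ together with the evaluation $\sum_{k=1}^n\binom{n}{k}=2^n-1$ — shows that the right-hand increment also equals $\frac{2^n-1}{n}$; since both sides of $(\star)$ agree at $n=1$, the identity follows. (Alternatively, $(\star)$ can be read off in one line from $\sum_{k=1}^n\frac{1}{k}\binom{n}{k}=\int_0^1\frac{(1+t)^n-1}{t}\,dt$ via the substitution $u=1+t$, which turns the integrand into $\sum_{m=0}^{n-1}u^m$.) The only real obstacle is recognizing that the difference of the two sides collapses to the cleaner identity $(\star)$; once that reduction is made, every remaining step is a routine application of the binomial relation $\frac{1}{k}\binom{n-1}{k-1}=\frac{1}{n}\binom{n}{k}$.
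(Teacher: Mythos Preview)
Your induction argument is correct. Both the outer induction (reducing (2.18) to the auxiliary identity $(\star)$) and the inner induction establishing $(\star)$ go through exactly as you describe; the key identity $\frac{1}{k}\binom{n-1}{k-1}=\frac{1}{n}\binom{n}{k}$ is used correctly in each place, and the integral variant for $(\star)$ is also fine.

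Your route, however, is genuinely different from the paper's. The paper gives a direct, non-inductive proof: it expands $2^i-1=\sum_{k=1}^{i}\binom{i}{k}$, uses the absorption identity $\frac{1}{i}\binom{i}{k}=\frac{1}{k}\binom{i-1}{k-1}$, interchanges the order of summation, and then applies the hockey-stick identity $\sum_{i=k}^{j}\binom{i-1}{k-1}=\binom{j}{k}$ twice (once over $i$, then again over $j$ after a second absorption) to collapse the triple sum directly into $\sum_{k=1}^{n}\frac{1}{k^2}\binom{n}{k}$. That argument never looks at $n-1$ and avoids singling out the auxiliary identity $(\star)$. Your approach, by contrast, isolates $(\star)$---which is a classical identity in its own right---and proves it separately; this has the merit of making the structure of the identity transparent (the $1/k^2$ comes from applying the same $1/k$ trick twice), and your integral alternative for $(\star)$ is arguably the slickest derivation of that step. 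Either method is perfectly acceptable here.
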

\begin{proof} 
Using the well known identities 
$\sum_{i=k}^{j}{i-1\choose k-1}={j\choose k}$ 
and $\frac{1}{j}{j\choose k}=\frac{1}{k}{j-1\choose k-1}$ with 
$k\le j$, and the fact that ${i\choose k}=0$ when $i<k$, we have 
  \begin{eqnarray*}
\sum_{1\le i\le j\le n}\frac{2^i-1}{ij}
&=&\sum_{1\le i\le j\le n}\frac{(1+1)^i-1}{ij}=
\sum_{1\le i\le j\le n}\frac{1}{j}\sum_{k=1}^{i}\frac{1}{i}{i\choose k}\\
&=&\sum_{1\le i\le j\le n}\frac{1}{j}\sum_{k=1}^{n}
\frac{1}{k}{i-1\choose k-1}=\sum_{k=1}^{n}\frac{1}{k}
\sum_{1\le i\le j\le n}\frac{1}{j}{i-1\choose k-1}\\
&=&\sum_{k=1}^{n}\frac{1}{k}\sum_{k\le i\le j\le n}
\frac{1}{j}{i-1\choose k-1}=\sum_{k=1}^{n}\frac{1}{k}\sum_{j=i}^n\frac{1}{j}
\sum_{i=k}^j{i-1\choose k-1}\\
&=&\sum_{k=1}^{n}\frac{1}{k}\sum_{j=i}^n\frac{1}{j}{j\choose k}
=\sum_{k=1}^{n}\frac{1}{k}\sum_{j=k}^n\frac{1}{k}{j-1\choose k-1}\\
&=&\sum_{k=1}^{n}\frac{1}{k^2}\sum_{j=k}^n{j-1\choose k-1}=
\sum_{k=1}^{n}\frac{1}{k^2}{n\choose k},
  \end{eqnarray*}
as desired.
 \end{proof}

\begin{lemma}\cite[Theorem 1.1 (1.1)]{s}
For any prime $p\ge 5$ holds
  $$
\sum_{k=1}^{p-1}\frac{H_k}{k\cdot 2^k}\equiv 
0\pmod{p}.\leqno{(2.19)}
   $$   
\end{lemma}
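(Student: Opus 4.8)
The plan is to feed the two preceding lemmas into one another and then collapse everything onto the single known congruence (2.17). Setting $n=p-1$ in the identity (2.18) of Lemma 2.5 and substituting the result into the congruence (2.12) of Lemma 2.4, I would obtain
$$\sum_{k=1}^{p-1}\frac{H_k}{k\cdot 2^k}\equiv \frac{1}{2}\sum_{1\le i\le j\le p-1}\frac{2^i-1}{ij}= \frac{1}{2}\sum_{k=1}^{p-1}\frac{1}{k^2}\binom{p-1}{k}\pmod{p}.$$
Thus the whole problem reduces to evaluating the binomial sum $\sum_{k=1}^{p-1}\binom{p-1}{k}/k^2$ modulo $p$.

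Next I would invoke the elementary and well-known congruence $\binom{p-1}{k}\equiv(-1)^k\pmod{p}$, valid for $0\le k\le p-1$; it follows at once from $\binom{p-1}{k}=\prod_{m=1}^{k}\frac{p-m}{m}\equiv\prod_{m=1}^{k}\frac{-m}{m}=(-1)^k\pmod{p}$. This turns the target into
$$\sum_{k=1}^{p-1}\frac{1}{k^2}\binom{p-1}{k}\equiv\sum_{k=1}^{p-1}\frac{(-1)^k}{k^2}\pmod{p},$$
so it remains only to show that the alternating sum $\sum_{k=1}^{p-1}(-1)^k/k^2$ vanishes modulo $p$.

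For this last step I would separate the even- and odd-indexed terms and write
$$\sum_{k=1}^{p-1}\frac{(-1)^k}{k^2}=2\sum_{\substack{1\le k\le p-1\\ k\ \mathrm{even}}}\frac{1}{k^2}-\sum_{k=1}^{p-1}\frac{1}{k^2}.$$
The even indices are $k=2m$ with $1\le m\le (p-1)/2$, so the first sum equals $\tfrac14\sum_{m=1}^{(p-1)/2}1/m^2$, while the second sum is $\equiv 0$ by (2.17). Hence it suffices to know that $\sum_{m=1}^{(p-1)/2}1/m^2\equiv 0\pmod p$, after which $\sum_{k=1}^{p-1}(-1)^k/k^2\equiv 0$ and therefore (2.19) follows.

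The only genuinely non-mechanical point is this final vanishing $\sum_{m=1}^{(p-1)/2}1/m^2\equiv 0\pmod p$, which I expect to be the crux, though it is short. Writing $\sum_{k=1}^{p-1}1/k^2=\sum_{k=1}^{(p-1)/2}1/k^2+\sum_{k=(p+1)/2}^{p-1}1/k^2$ and replacing $k$ by $p-k$ in the second sum (which leaves $1/k^2$ unchanged modulo $p$, since $(p-k)^2\equiv k^2$) yields $\sum_{k=1}^{p-1}1/k^2\equiv 2\sum_{k=1}^{(p-1)/2}1/k^2\pmod{p}$, and dividing by the invertible $2$ delivers the claim from (2.17). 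Everything else in the argument is direct substitution together with the standard binomial congruence.
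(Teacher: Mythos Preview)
Your proposal is correct and follows essentially the same route as the paper: combine Lemmas 2.4 and 2.5, reduce via $\binom{p-1}{k}\equiv(-1)^k$, split the alternating sum into even and odd parts, apply (2.17), and finish with $\sum_{m=1}^{(p-1)/2}1/m^2\equiv 0\pmod p$. The only differences are cosmetic: you keep the harmless factor $\tfrac12$ that the paper's display (2.20) silently drops, and you supply the short symmetry argument for the half-sum congruence that the paper simply cites.
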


\begin{proof}

Using the congruence (2.12) from Lemma 2.4 and the  identity 
(2.18) with $n=p-1$ in Lemma 2.5, we find that 
   $$
\sum_{k=1}^{p-1}\frac{H_k}{k\cdot 2^k}\equiv 
\sum_{k=1}^{p-1}\frac{1}{k^2}{p-1\choose k}\pmod{p}.\leqno{(2.20)}
   $$   
It is well known (see e.g., \cite{hw}) that for $k=1,2,\ldots,p-1,$
  $$
{p-1\choose k}\equiv (-1)^k\pmod{p}.\leqno{(2.21)}
  $$
Then from  (2.20), (2.21) and (2.17) we get  
 \begin{eqnarray*}
\sum_{k=1}^{p-1}\frac{H_k}{k\cdot 2^k}
&\equiv& \sum_{k=1}^{p-1}\frac{(-1)^k}{k^2}=
\sum_{k=1}^{p-1}\frac{1}{k^2}-
2\sum_{1\le j\le p-1\atop 2\mid j}\frac{1}{j^2}\\
&\equiv &
-2\sum_{1\le j\le p-1\atop 2\mid j}\frac{1}{j^2}= -\frac{1}{2}\sum_{k=1}^{(p-1)/2}\frac{1}{k^2}\pmod{p}.
 \end{eqnarray*}
Finally, the above congruence together with a well known 
fact that (see e.g., \cite[Corollary 5.2 (a) with $k=2$]{s1})
  $$
\sum_{k=1}^{(p-1)/2}\frac{1}{k^2}\equiv 0\pmod{p}
  $$
yields 
 $$
\sum_{k=1}^{p-1}\frac{H_k}{k\cdot 2^k}\equiv 0\pmod{p}.
  $$
This concludes the proof.
\end{proof}
\begin{proof}[Proof of the congruence $(1.2)$]
The congruence (1.2) immediately follows from (2.7) of Lemma 2.3
and (2.19) of Lemma 2.6.
\end{proof}

\end{document}